\definecolor{urlcolor}{rgb}{0, 0.5, 0}
\definecolor{citecolor}{rgb}{.5,0,.25}
\definecolor{linkcolor}{rgb}{0,0,1}
\newtheorem{theorem}{Theorem}
\newtheorem{proposition}[theorem]{Proposition}
\newtheorem{cor}[theorem]{Corollary}
\newtheorem{lemma}[theorem]{Lemma}
\theoremstyle{remark}
\newcommand{\executeiffilenewer}[3]{%
\ifnum\pdfstrcmp{\pdffilemoddate{#1}}%
{\pdffilemoddate{#2}}>0%
{\immediate\write18{#3}}\fi%
}
\def\svgmode{ps}\ifx\pdfoutput\undefined
\def\executeiffilenewer#1#2#3{\immediate\write18{#3}}\else
\def\svgmode{pdf}\fi\fi
\newcommand{%
\executeiffilenewer{.svg}{.\svgmode}%
{inkscape -z -D --file=.svg --export-latex %
--export-\svgmode=.\svgmode}%
\input{.\svgmode_tex}%
}[1]{%
\executeiffilenewer{#1.svg}{#1.\svgmode}%
{inkscape -z -D --file=#1.svg --export-latex %
--export-\svgmode=#1.\svgmode}%
\input{#1.\svgmode_tex}%
}
\newcommand{\includesvg}[2][]{%
\def\tempa{#1}\def\tempb{}%
\ifx\tempa\tempb\else\let\svgwidth\tempa\fi
\executeiffilenewer{\svgpath#2.svg}{\svgpath#2.\svgmode}%
{inkscape -z -D --file=\svgpath#2.svg --export-latex %
--export-\svgmode=\svgpath#2.\svgmode}%
\input{\svgpath#2.\svgmode_tex}%
}
\def\svgpath{figures/}
\title{A linear bound for the Colin de Verdi\`ere parameter $\mu$ for graphs embedded on surfaces} 
\author[1]{Camille Lanuel\thanks{This work was conducted when this author was at the G-SCOP laboratory in Grenoble.}} 
\author[2]{Francis Lazarus \thanks{This author is partially supported by LabEx PERSYVAL-Lab (ANR-11-LABX-0025-01) funded by the French program Investissement d’avenir.}}
\author[3]{Rudi Pendavingh}
\affil[1]{Universit\'e de Lorraine, CNRS, Inria, LORIA, F-54000 Nancy, France}
\affil[2]{G-SCOP, CNRS, UGA, Grenoble, France}
\affil[3]{Department of Mathematics and Computer Science, Eindhoven University of Technology, 5600 MB Eindhoven, Netherlands}
\newcommand{\R}{\mathbb{R}}
\definecolor{definecolor}{rgb}{0,0.1,0.55}
\def\define#1{\textbf{\textcolor{definecolor}{#1}}}
\begin{document}

\maketitle

\begin{abstract}
  We provide a combinatorial and self-contained proof that for all graphs $G$ embedded on a surface $S$, the Colin de Verdi\`ere parameter $\mu(G)$ is upper bounded by  $7-2\chi(S)$.
\end{abstract}

\section{Introduction}
\label{sec:Introduction}
In this short note we establish an upper bound for the Colin de Verdière’s graph parameter $\mu$ for graphs that can be embedded on a fixed surface. This parameter was introduced by Colin de Verdière~\cite{c-nigcp-90} in analogy with the multiplicity of the second eigenvalue of Schrödinger operators on a Riemannian surface. The exact definition of $\mu(G)$ resorts to a transversality condition between the space of so-called discrete Schrödinger operators on a graph $G=(V,E)$ and a certain stratification of the space of symmetric matrices of dimension $V\times V$. This \emph{Strong Arnold Hypothesis} (SAH),  as coined by Colin de Verdière~\cite{c-hta-88}, expresses a stability property and ensures that $\mu$ is minor-monotone. It can thus be applied to the graph minor theory of Robertson and Seymour. We refer to the survey by van der Holst, Lovász and Schrijver~\cite{hls-cdvgp-99} for more properties on $\mu$ and the definition of the strong Arnold hypothesis.

Here, we are interested in an upper bound for the parameter $\mu$ of the minor-closed family of graphs that can be embedded on a surface $S$. It is relatively easy to show that $\mu(K_n)=n-1$ for $K_n$ the complete graph
with $n$ vertices~\cite{hls-cdvgp-99}. On the other hand, the largest $n$ such that $K_n$ embeds on $S$ is known as the Heawood number
\[\gamma(S) = \left\lfloor\frac{7+\sqrt{49-24\chi(S)}}{2}\right\rfloor,
\]
where $\chi(S)$ is the Euler characteristic of $S$.
  Colin de Verdière~\cite{c-clpfs-87} conjectured that the maximum of $\mu$ for all graphs that can be embedded in $S$ is attained at $K_{\gamma(S)}$. In other words,  $\mu$ is upper bounded by $\gamma(S)-1$.
  In practice, the known upper bounds have been proved in the realm of Riemannian surfaces where $\mu$ is defined for each Riemannian metric as the maximum multiplicity of the second eigenvalue of Schrödinger differential operators (based on the Laplace-Beltrami operator associated to the given metric). In this framework, Besson~\cite{b-mpvps-80} obtained the bound $7-2\chi(S)$. When the Euler characteristic is negative, this bound was further decreased by 2 by Nadirashvili~\cite{n-melo-88}, who thus showed the upper bound $5-2\chi(S)$. Sévennec~\cite{s-mssec-02} eventually divided by two the dependency in the characteristic to obtain the upper bound $5-\chi(S)$. Note that all those bounds are linear in the genus of $S$ and remain far from the square root bound in the conjecture of Colin de Verdière. It can be proved that any upper bound for $\mu$ in the Riemannian world holds for graphs, cf. \cite[Th. 6.3]{c-sg-98} and \cite[Th. 7.1]{c-clpfs-87}. However, the proof relies on the construction of Schr\"odinger differential operators from combinatorial ones and is not particularly illuminating from the combinatorial viewpoint.
  To our knowledge no purely combinatorial self-contained proof has yet appeared in a peer-reviewed journal.  The goal of this note is to partially fill this gap by proving the bound of Besson in the combinatorial framework of graphs.
\begin{theorem}\label{th:mu-bound}
  Let $G$ be a graph that can be embedded on a surface $S$, then
  \[\mu(G) \leq 7 -2\chi(S).\]
\end{theorem}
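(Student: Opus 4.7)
The plan is to give a combinatorial analogue of Besson's Riemannian argument. Let $M$ be a symmetric matrix realizing $\mu(G)=k$: $M$ has the required off-diagonal sign pattern (negative on edges, zero on non-edges off the diagonal), exactly one negative eigenvalue, corank equal to $k$, and satisfies the Strong Arnold Hypothesis (SAH). Write $N=\ker M$, a vector space of dimension $k$; the goal is to show $k\leq 7-2\chi(S)$.

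The first ingredient I would set up is a discrete nodal-domain lemma. For any nonzero $x\in N$, consider the sign supports $V^+(x)=\{v:x_v>0\}$ and $V^-(x)=\{v:x_v<0\}$. Since $x$ lies in the second eigenspace of $M$ (eigenvalue $0$, above a single negative eigenvalue), Cauchy interlacing applied to the principal submatrices $M[V^+(x)]$ and $M[V^-(x)]$, combined with Perron--Frobenius-type arguments exploiting the sign pattern of $M$, forces the subgraphs of $G$ induced on $V^+(x)$ and $V^-(x)$ to have only a small, controlled number of connected components. Call these the nodal domains of $x$.

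Next I would exploit the SAH to produce a rich family of null vectors whose combined nodal decompositions give structural information about $G$. The SAH is a transversality condition that prevents local deformations of $M$ preserving the kernel dimension, and combined with general-position arguments it yields null vectors realizing a variety of distinct sign patterns. From these one extracts an auxiliary graph $H$ whose vertices correspond to nodal domains and whose edges record adjacencies in $G$ between distinct domains. Because the branch sets are vertex-disjoint connected subgraphs of $G$, the embedding of $G$ on $S$ descends to an embedding of $H$ on $S$. Finally, Euler's formula for $H$ on $S$, combined with the face inequality $2|E(H)|\geq 3|F(H)|$, gives $|E(H)|\leq 3(|V(H)|-\chi(S))$; matching this against the edge density forced by the nodal construction closes the argument and delivers $k\leq 7-2\chi(S)$. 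That the resulting constant is weaker than the Heawood bound $\gamma(S)-1$ reflects that the nodal argument does not quite force $H$ to be complete, only dense enough to produce the linear Besson estimate.

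I expect the main obstacle to lie in the SAH-based construction of enough null vectors with sufficiently controlled sign patterns --- in effect, in making the \emph{generic position} step in the null space truly combinatorial. The Riemannian proof bypasses this difficulty by smoothly deforming Schrödinger operators, and transferring the argument to purely discrete matrix perturbations, while staying inside the space of admissible discrete Schrödinger operators on $G$, is where most of the technical effort will lie.
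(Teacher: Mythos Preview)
Your proposal has a genuine structural gap, and the paper's proof takes a quite different route.

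The step where you ``exploit the SAH to produce a rich family of null vectors whose combined nodal decompositions give structural information about $G$'' is not a known mechanism and I do not see how to make it work. SAH is a transversality condition on the \emph{matrix} $M$ inside the manifold of symmetric matrices of fixed corank; it guarantees stability of the multiplicity under perturbations and hence minor monotonicity, but it does not by itself manufacture null vectors with prescribed or varied sign patterns. More seriously, even if you had several null vectors $x^{(1)},\dots,x^{(r)}$ with interesting sign patterns, their nodal domains are \emph{not} pairwise vertex-disjoint, so they cannot serve as branch sets of a minor $H$ of $G$. If instead you take a single $x$, the Courant-type theorem gives only two nodal domains (for $x$ of minimal support), which is far too few to encode $k$. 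Your final Euler-formula step $|E(H)|\le 3(|V(H)|-\chi(S))$ is fine once $H$ exists, but nothing in the outline produces an $H$ whose edge density actually grows with $k$.

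The paper's argument avoids this obstacle entirely. It uses SAH only through minor monotonicity: starting from $G$ embedded on $S$, it replaces $G$ by a \emph{triangulation} $H$ of $S\setminus D$ (for a small disk $D$) having $G$ as a minor and having edgewidth exactly $\mu(G)-1$, with $\mu(G)-1$ vertices on $\partial D$. Since $\mu(H)\ge\mu(G)$, the $\mu(G)$-dimensional kernel of an optimal operator on $H$ contains a nonzero $f$ vanishing on all $\mu(G)-1$ boundary vertices; this is the \emph{only} place the dimension $k$ is spent. One then analyses the zero set $S_f^0$ of the piecewise-linear extension of this single $f$: by the nodal theorem $S_f^+$ and $S_f^-$ are connected, so $\chi(S_f^0)\ge\chi(S)-2$. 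The disk $D$ sits inside a two-dimensional component of $S_f^0$ whose boundary is shown to be non-contractible in $S\setminus D$, hence of length at least the edgewidth $\mu(G)-1$; after contracting two-dimensional pieces to vertices this yields a graph $\Gamma$ with minimum degree $\ge 2$ and one vertex of degree $\ge\mu(G)-1$, forcing $\chi(\Gamma)\le(3-\mu(G))/2$. Combining the two inequalities gives $\mu(G)\le 7-2\chi(S)$.

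So the missing ideas in your outline are: (i) passing to an auxiliary triangulation with prescribed large edgewidth (this is where minor monotonicity, hence SAH, is actually used), and (ii) spending the kernel dimension on a \emph{single} eigenfunction vanishing at $\mu(G)-1$ prescribed points, then reading off $\mu(G)$ from the topology of its zero set rather than from a dense minor.
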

Our proof completes and slightly simplifies a proof of Pendavingh that appeared in his PhD thesis~\cite{p-sggc-98}.  Before describing the general strategy of the proof, we provide some relevant definitions and basic facts.

\section{Background}
\paragraph{Schrödinger operators and $\mu$.}
Let $G=(V,E)$ be a connected simplicial\footnote{A graph is \emph{simplicial}, or \emph{simple}, if it has no loops or multiple edges.}  graph with at least two vertices.
A \define{Schrödinger operator} on $G$, sometimes called a generalized Laplacian, is a symmetric $V\times V$ matrix such that for $i\neq j \in V$, its $ij$ coefficient is negative if $ij\in E$ and zero otherwise. (There is no condition on the diagonal coefficients.)
It follows from Perron–Frobenius theorem that the first (smallest) eigenvalue of a Schrödinger operator $L$ has multiplicity one~\cite{c-tccc-94}. Now, if $\lambda_2$ is the second eigenvalue of $L$, then the first eigenvalue of $L-\lambda_2Id$ is negative and its second eigenvalue is zero. This translation by $-\lambda_2Id$ does not change the sequence of multiplicities of the eigenvalues of $L$, nor the stability of $L$ with respect to the strong Arnold hypothesis. Consequently, we can safely restrict to Schrödinger operators whose second eigenvalue is zero. We can now define $\mu(G)$ as the maximal corank (dimension of the kernel) of a Schrödinger operator satisfying the strong Arnold hypothesis. As a fundamental property, $\mu$ is minor-monotone.
\begin{theorem}[\cite{c-nigcp-90}]\label{th:minor-monotone}
   If $H$ is a minor of $G$, then $\mu(H) \leq \mu(G)$.
 \end{theorem}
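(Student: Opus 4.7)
The strategy is to reduce to the three elementary minor operations --- vertex deletion, edge deletion, and edge contraction --- since every minor is obtained by a finite sequence of them. For each operation taking $G$ to $H$, I would show that starting from a Schrödinger operator $M$ on $H$ satisfying SAH with corank $k=\mu(H)$ and second eigenvalue $0$, one can construct a Schrödinger operator $M'$ on $G$ with corank at least $k$ and still satisfying SAH; this directly yields $\mu(H)\leq\mu(G)$.

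For vertex deletion $H = G - v$, extend $M$ by a row and column for $v$: put small negative entries at positions $(v,u)$ with $uv\in E(G)$, zeros at the other off-diagonal positions, and a sufficiently large diagonal entry $M'_{vv}$. Because $M'_{vv}$ dominates the $v$-row, the $v$-coordinate of any eigenvector of $M'$ with eigenvalue near $0$ is small, and a standard perturbation argument shows that, after translating to fix the second eigenvalue at $0$, the corank of $M'$ is at least $k$. SAH is an open condition on symmetric matrices, so it is preserved, or can be restored by a generic small perturbation inside the Schrödinger operators on $G$.

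For edge deletion $H = G - e$ with $e=uv$, define $M(t)$ to agree with $M$ except for $M(t)_{uv}=M(t)_{vu}=-t$. For $t>0$ this is a Schrödinger operator on $G$. The implicit function theorem applied at the kernel of $M$ at eigenvalue $0$ keeps the corank of $M(t)$ equal to $k$ for $t$ small, after an $O(t)$ translation that places the second eigenvalue back at $0$. Again SAH persists by openness in the ambient space of symmetric matrices.

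The main obstacle is edge contraction $H = G/e$ with $e=uv$ collapsing to a vertex $w\in V(H)$. I would construct $M'$ on $G$ by splitting the $w$-row and $w$-column between $u$ and $v$: set $M'_{ux}=M'_{vx}=M_{wx}$ for $x\notin\{u,v\}$, $M'_{uv}=-t$ for $t$ large, and choose $M'_{uu},M'_{vv}$ so that their sum equals $M_{ww}+2t$. As $t\to\infty$, the $\{u,v\}$-block forces any kernel vector $x$ of $M'$ to satisfy $x_u=x_v$, and such vectors correspond bijectively to kernel vectors of $M$, yielding corank at least $k$. The delicate step is to check that SAH is preserved for large finite $t$: one must verify that the tangent space to the corank-$k$ stratum at $M'$, together with the affine space of Schrödinger operators on $G$, spans the full space of symmetric matrices on $V(G)$. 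Via the identification $x_u=x_v$ on the kernel, together with the new off-diagonal $(u,v)$ coordinate available on $G$ but not on $H$, this reduces to the transversality statement for $M$ on $H$, which holds by assumption. Carrying out this reduction cleanly --- and handling the extra ``diagonal splitting'' degree of freedom between $M'_{uu}$ and $M'_{vv}$ --- is the main technical content of the proof.
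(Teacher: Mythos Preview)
The paper does not prove this theorem at all: it is quoted as a background fact with a citation to Colin de Verdi\`ere's original paper, and the reader is referred to the survey of van der Holst, Lov\'asz and Schrijver for the Strong Arnold Hypothesis. There is therefore no ``paper's own proof'' to compare your proposal against.

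For what it is worth, your outline is the standard route taken in the literature (reduce to the three elementary minor operations and lift an optimal operator from $H$ to $G$). One point in your write-up deserves tightening: in the edge-deletion and vertex-deletion cases you say that the implicit function theorem, or a perturbation argument, keeps the corank at $k$ and that ``SAH persists by openness.'' This has the logic slightly backwards. A generic symmetric perturbation of a corank-$k$ matrix \emph{lowers} the corank; it is exactly the transversality encoded in SAH that lets you move inside the affine space of Schr\"odinger operators on $G$ while remaining on the corank-$k$ stratum. So SAH is the \emph{input} that preserves corank, not merely a property that survives the perturbation. Once you make this explicit (for instance via the formulation of SAH as the nonexistence of a nonzero symmetric $X$ with $MX=0$ and $X_{ij}=0$ for $i=j$ or $ij\in E$), the deletion cases become routine, and your description of the contraction case---splitting the $w$-row and column, sending the new off-diagonal entry to $-\infty$, and reducing SAH for $M'$ to SAH for $M$---is indeed where the real work lies.
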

 It also characterizes planar graphs.
 \begin{theorem}[\cite{c-nigcp-90}]\label{th:mu-planar}
   A graph $G$ is planar if and only if $\mu(G)\leq 3$.
 \end{theorem}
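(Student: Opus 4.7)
The plan is to prove each implication separately. For the converse implication ($\mu(G)\le 3 \Rightarrow G$ is planar), I would argue by contrapositive. If $G$ is non-planar, then by Wagner's theorem $G$ contains $K_5$ or $K_{3,3}$ as a minor, so by Theorem~\ref{th:minor-monotone} it is enough to verify $\mu(K_5)\ge 4$ and $\mu(K_{3,3})\ge 4$. For $K_5$, I would take the matrix $-J$ (all entries equal to $-1$), which is a valid Schrödinger operator whose kernel is the $4$-dimensional orthogonal complement of the all-ones vector. For $K_{3,3}$, I would take the negated adjacency matrix $-A$, whose spectrum is $\{\pm 3, 0, 0, 0, 0\}$, yielding a $4$-dimensional kernel. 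In both cases the Strong Arnold Hypothesis reduces to showing that the only symmetric matrix with vanishing diagonal and vanishing edge entries that annihilates the kernel is the zero matrix: for $K_5$ this is automatic because every off-diagonal slot is an edge, and for $K_{3,3}$ it follows from a short linear computation on the $3\times 3$ off-edge block within each side of the bipartition (three equations forcing pairwise sums of off-diagonal entries to vanish).

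For the forward implication ($G$ planar $\Rightarrow \mu(G)\le 3$), I would argue by contradiction. Assume that a planar graph $G$ admits a Schrödinger operator $L$ with $\dim\ker L \ge 4$ satisfying SAH. Fix a $4$-dimensional subspace $W\subseteq\ker L$ with basis $x_1,\dots,x_4$ and consider the vertex representation $\phi\colon V\to\R^4$, $v\mapsto(x_1(v),\dots,x_4(v))$. Each equation $Lx_i=0$ gives, for every vertex $v$, the discrete harmonicity relation
\[
  L_{vv}\,\phi(v) \;=\; -\sum_{u\sim v} L_{vu}\,\phi(u),
\]
in which the weights $-L_{vu}$ are strictly positive on edges, so each $\phi(v)$ is expressed as a positive combination of its neighbors' images (up to the sign of $L_{vv}$). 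This ``balanced'' configuration interacts with the planar structure of $G$ via Euler's formula ($|E| \le 3|V|-6$) and a nodal-domain analysis: for a generic linear functional $y^*\in W^*$, the sign pattern of $y^*\circ\phi$ partitions $V$ into positive, negative, and zero vertices whose connected components are simultaneously constrained by the planarity of $G$ and by $\dim W$. Following the classical Colin de Verdière strategy, one shows that these two constraints are incompatible when $\dim W = 4$ and $G$ is planar.

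The main obstacle is deploying SAH rigorously. SAH is a transversality condition in the space of symmetric matrices, and the genericity statements needed for $\phi$ (or for the nodal-domain structure of generic $y^*$) must be extracted from it through an implicit-function/perturbation argument inside the appropriate corank stratum. An alternative, more geometric route would invoke the Koebe--Andreev--Thurston circle packing theorem to construct directly a corank-$3$ Schrödinger operator on $G$ witnessing $\mu(G)\le 3$; I would prefer the perturbation route because it stays close in spirit to the combinatorial setup of the present paper, but either approach requires nontrivial technical work precisely at the point where SAH enters.
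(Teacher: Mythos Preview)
The paper does not prove this theorem at all: it is quoted as a known result of Colin de Verdi\`ere and used as a black box (only the implication ``planar $\Rightarrow \mu\le 3$'' is invoked, in Section~\ref{sec:overview}, to dispose of the sphere case). There is therefore no proof in the paper to compare your proposal against.

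On the substance of your sketch: the direction ``$\mu(G)\le 3\Rightarrow G$ planar'' is correct and essentially complete. The matrices $-J$ for $K_5$ and $-A$ for $K_{3,3}$ have corank $4$, and your SAH check is valid (for $K_{3,3}$, the column-sum conditions $a+b=a+c=b+c=0$ on the off-diagonal block force $a=b=c=0$). Together with Wagner's theorem and Theorem~\ref{th:minor-monotone} this yields the implication.

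The direction ``$G$ planar $\Rightarrow \mu(G)\le 3$'' is the hard one, and here your proposal is only an outline with a gap you yourself flag. The nodal/representation strategy you describe is in the spirit of van der Holst's argument, but as written it does not isolate the key step (that for a minimal-support kernel vector the positive and negative supports are connected, and that a fourth independent kernel vector then forces a $K_5$ or $K_{3,3}$ minor). Your alternative via circle packings contains an actual error: exhibiting a corank-$3$ Schr\"odinger operator on $G$ witnesses $\mu(G)\ge 3$, not $\mu(G)\le 3$; an explicit operator cannot by itself give an upper bound on $\mu$. So the forward implication remains unproved in your proposal.
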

We view a vector of $\R^V$ as a discrete map $V\to \R$, so that a Schrödinger operator acts linearly on the set of discrete maps. For $f: V\to \R$, we denote by $V_f^+, V_f^0, V_f^-$ the subsets of vertices where $f$ takes respectively positive, null and negative values. The support of $f$ is the subset $V_f^+\cup V_f^-$ of vertices with nonzero values. As a simple property of Schrödinger operators we have
\begin{lemma}[\cite{hls-cdvgp-99}]\label{lem:plus-minus}
  Let $L$ be a Schrödinger operator of $G$ and let $f\in \ker L$. Then, a vertex $v\in V_f^0$ is adjacent to a vertex of $V_f^+$ if and only if $v$ is adjacent to a vertex of $V_f^-$.
\end{lemma}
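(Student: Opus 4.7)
The plan is to unpack the equation $Lf=0$ at the vertex $v\in V_f^0$ and exploit the sign pattern of the off-diagonal entries of $L$.

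First I would write out the $v$-th coordinate of $Lf=0$. Since $f(v)=0$, the diagonal term $L_{vv}f(v)$ vanishes, and the only surviving contributions come from neighbors $w$ of $v$, because $L_{vw}=0$ whenever $w\neq v$ and $vw\notin E$. This yields
\[
\sum_{w\sim v} L_{vw}\,f(w) \;=\; 0.
\]

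Next, I would argue by contradiction. Suppose $v$ is adjacent to some vertex of $V_f^+$ but to no vertex of $V_f^-$. Then for every neighbor $w$ of $v$ we have $f(w)\geq 0$, with strict inequality for at least one neighbor. By the definition of a Schrödinger operator, $L_{vw}<0$ for every neighbor $w$. Hence each term $L_{vw}f(w)$ is $\leq 0$ and at least one term is strictly negative, contradicting that the displayed sum equals zero. So if $v$ has a neighbor in $V_f^+$, it must have a neighbor in $V_f^-$.

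The converse follows by the same argument applied to $-f$, which also lies in $\ker L$ and swaps $V_f^+$ with $V_f^-$. The only ingredients are the sign condition on the off-diagonal entries of $L$ and the vanishing of $f(v)$; I do not expect any real obstacle, and the entire argument should fit in a few lines.
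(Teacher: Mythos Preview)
Your argument is correct and is exactly the standard proof: at a vertex $v$ with $f(v)=0$ the equation $(Lf)_v=0$ reduces to $\sum_{w\sim v}L_{vw}f(w)=0$, and the strict negativity of the off-diagonal entries forces the neighbor values to take both signs or none. The paper does not give its own proof of this lemma but simply cites it from~\cite{hls-cdvgp-99}, so there is nothing to compare against; your write-up is precisely what one finds in that reference.
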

A discrete version of the nodal theorem of Courant reads as follows.
\begin{theorem}[\cite{c-tccc-94,h-sppcc-95}]\label{th:nodal}
  Let $L$ be a Schrödinger operator of $G$ and let $f\in \ker L$ be a nonzero map with minimal support. Then, the subgraphs of $G$ induced respectively by $V_f^+$ and $V_f^-$ are nonempty and connected.
\end{theorem}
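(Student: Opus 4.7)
My plan is to first establish nonemptiness and then connectedness by producing, whenever connectedness fails, a kernel element with strictly smaller support than $f$. The Background restricts us to Schrödinger operators with second eigenvalue zero, so Perron--Frobenius applied to $cI - L$ for large $c$ makes the smallest eigenvalue $\lambda_1$ of $L$ strictly negative, simple, and equipped with an everywhere-positive eigenvector $v_1$. Since $Lf = 0$, the orthogonality $\langle v_1, f\rangle = \lambda_1^{-1}\langle v_1, Lf\rangle = 0$ prevents $f$ from being everywhere nonnegative or everywhere nonpositive; in particular $V_f^+$ and $V_f^-$ are both nonempty.

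For connectedness of $V_f^+$, I will argue by contradiction. Pick a connected component $A_1$ of $V_f^+$ in $G$ and set $A_2 := V_f^+ \setminus A_1$; both are nonempty and no edge of $G$ joins them, so $L_{vj} = 0$ for $v \in A_1$, $j \in A_2$. Put $f_i := f \cdot \mathbf{1}_{A_i}$ for $i = 1, 2$. Expanding $(Lf)(v) = 0$ for $v \in A_i$ gives
\[ (Lf_i)(v) \;=\; -\sum_{j \in V_f^-} L_{vj}\, f(j), \]
which is $\leq 0$ because $L_{vj} \leq 0$ and $f(j) < 0$. Multiplying by $f(v) > 0$ and summing over $v \in A_i$ yields $f_i^\top L f_i \leq 0$, and the non-adjacency between $A_1$ and $A_2$ forces $f_1^\top L f_2 = 0$. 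Hence the quadratic form $g \mapsto g^\top L g$ is non-positive on the two-dimensional subspace $U := \mathrm{span}(f_1, f_2)$.

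The spectral gap then closes the argument. Since $\lambda_1$ is simple, the negative-eigenspace $V_{<0} = \mathrm{span}(v_1)$ is one-dimensional, so $\dim(U \cap v_1^\perp) \geq 1$ and we may pick a nonzero $g \in U \cap v_1^\perp$. Decomposing $g$ in an eigenbasis of $L$: its component along $v_1$ vanishes, so only eigenvalues $\geq 0$ contribute, giving $g^\top L g \geq 0$. Combined with the non-positivity on $U$, this forces $g^\top L g = 0$, which in turn forces every positive-eigenvalue contribution to vanish, so $g \in \ker L$. Since $\mathrm{supp}(g) \subseteq A_1 \cup A_2 \subseteq V_f^+$ and $V_f^- \neq \emptyset$, we have $\mathrm{supp}(g) \subsetneq \mathrm{supp}(f)$, contradicting the minimality of the support of $f$. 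This shows $V_f^+$ is connected, and applying the same argument to $-f$ handles $V_f^-$.

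The main obstacle is synthesizing the local sign computation on $U$ with the global spectral bound. Neither part alone produces a smaller-support kernel element: the sign calculations only give $g^\top L g \leq 0$ on $U$, and a purely variational argument alone would not tie the extremal $g$ to $\ker L$. It is the simplicity of $\lambda_1 < 0$ (from Perron--Frobenius) together with the identity $\lambda_2 = 0$ that allow us to pull $g$ into $v_1^\perp$ and then upgrade it from an element of $U$ with $g^\top L g = 0$ to a genuine element of $\ker L$.
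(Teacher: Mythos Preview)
The paper does not prove this theorem; it is stated in the Background section with citations to Colin de Verdi\`ere and van der Holst, and then used as a black box. There is therefore no paper proof to compare against.

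Your argument is correct and is essentially the standard proof of this discrete nodal theorem. The non-emptiness step is fine: under the paper's standing convention that $\lambda_2=0$, orthogonality of $f$ to the positive Perron eigenvector forces $f$ to change sign. For connectedness, your computation that $f_i^\top L f_i\le 0$ and $f_1^\top L f_2=0$ is accurate (the supports of $f_1,f_2$ are disjoint and non-adjacent, so only the diagonal blocks survive), and the passage from $g^\top L g=0$ with $g\perp v_1$ to $g\in\ker L$ is exactly the spectral decomposition argument, using that $\lambda_1$ is simple and all other eigenvalues are $\ge 0$. The conclusion $\operatorname{supp}(g)\subseteq V_f^+\subsetneq\operatorname{supp}(f)$ then contradicts minimality. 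One small remark: you implicitly use that $G$ is connected (for Perron--Frobenius) and that the paper has normalised $\lambda_2=0$; both are part of the paper's standing hypotheses, so this is legitimate, but it is worth noting that the theorem as literally stated would fail if $0$ were the \emph{first} eigenvalue.
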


\paragraph{Surfaces and Euler characteristic}
By a \define{surface} of finite type we mean a topological space homeomorphic to a compact two dimensional manifold minus a finite number of points. A surface may have nonempty boundary and each of the finitely many boundary components is homeomorphic to a circle. We shall only consider surfaces of finite type and omit to specify this condition. A \define{closed} surface means a compact surface without boundary. By a triangulation of a surface, we mean a simplicial complex together with a homeomorphism between its underlying space and the surface.

Say that a topological space has \define{finite homology} if it has finitely many nontrivial homology groups\footnote{In general, one should consider singular homology for non triangulated spaces.} and each of these are finitely generated. The \define{Euler characteristic} $\chi(X)$ of a space $X$ with finite homology is the alternating sum of its Betti numbers. When $X$ is a finite simplicial complex (and more generally a finite CW complex), this definition coincides with the alternating sum of the numbers of cells of each dimension. The Euler characteristic is homotopy invariant: two spaces with the same homotopy type have the same Euler characteristic.
\begin{proposition}[Inclusion-exclusion formula~{\cite[p.205]{s-at-89}}]\label{prop:inclusion-exclusion}
  Let $Y,Z\subset X$ be spaces with finite homology such that $X = \operatorname{Int}\, Y \cup \operatorname{Int}\, Z$, then
  \[\chi(X)=\chi(Y)+\chi(Z)-\chi(Y\cap Z).
    \]
  \end{proposition}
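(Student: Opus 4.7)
The plan is to invoke the Mayer--Vietoris long exact sequence for the triad $(X;Y,Z)$. The hypothesis $X = \operatorname{Int} Y \cup \operatorname{Int} Z$ is exactly the condition ensuring that the inclusions of $Y$ and $Z$ into $X$ yield a Mayer--Vietoris sequence in singular homology:
\[
\cdots \to H_n(Y \cap Z) \to H_n(Y) \oplus H_n(Z) \to H_n(X) \to H_{n-1}(Y \cap Z) \to \cdots
\]
(This is the standard consequence of excision applied after barycentric subdivision relative to the open cover $\{\operatorname{Int} Y, \operatorname{Int} Z\}$ of $X$.)

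First I would verify that $X$ itself has finite homology. By hypothesis, $Y$, $Z$ and $Y\cap Z$ have only finitely many nontrivial homology groups, each finitely generated, so the same holds for $Y \oplus Z$. Exactness then forces each $H_n(X)$ to fit between finitely generated groups, hence to be finitely generated, and to vanish in degrees where both neighbours already vanish. So $X$ has finite homology and all Betti numbers are well defined.

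Next I would apply the purely algebraic fact that in any long exact sequence of finitely generated abelian groups with only finitely many nonzero terms, the alternating sum of the ranks vanishes (proved by splitting the sequence into short exact pieces and using additivity of rank). Grouping terms by the three spaces involved yields
\[
\sum_n (-1)^n \bigl( b_n(Y\cap Z) - b_n(Y) - b_n(Z) + b_n(X) \bigr) = 0,
\]
which rearranges immediately to $\chi(X) = \chi(Y) + \chi(Z) - \chi(Y\cap Z)$.

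There is no real obstacle here: the only subtle point is checking that the open-cover hypothesis suffices to run Mayer--Vietoris in singular homology (as opposed to needing an actual open cover by open subsets of $X$), but this is exactly why the statement is phrased with the interiors of $Y$ and $Z$ rather than $Y$ and $Z$ themselves. Everything after invoking the sequence is the standard ``Euler characteristic of an exact sequence is zero'' bookkeeping.
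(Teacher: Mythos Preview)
The paper does not actually prove this proposition; it simply cites it from Spanier. Your argument via the Mayer--Vietoris sequence is the standard proof and is correct in outline; it is essentially what one finds in the cited reference. One small quibble: you write ``By hypothesis, $Y$, $Z$ and $Y\cap Z$ have only finitely many nontrivial homology groups,'' but the stated hypothesis only says this for $Y$ and $Z$. Finite homology of $Y\cap Z$ does not follow from that of $Y$ and $Z$ alone; it is an implicit additional assumption (without it $\chi(Y\cap Z)$ in the conclusion would not even be defined). With that understood, your bookkeeping is fine and the conclusion follows.
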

  Note that this formula might be false if we do not make the assumption that $X$ is the union of the interiors of $Y$ and $Z$. As a counter-example one can take $X$ a line segment, $Y$ a point of $X$ and $Z=X\setminus Y$.
\begin{cor}\label{cor:chi-sum}
  Let $X$ be a triangulated compact surface and let $Y$ be a subcomplex of $X$. Then
  \[\chi(X)=\chi(Y)+\chi(X\setminus Y).
    \]
\end{cor}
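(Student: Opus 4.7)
The plan is to apply the inclusion--exclusion formula (Proposition~\ref{prop:inclusion-exclusion}) to a regular-neighborhood decomposition of $X$. I would thicken $Y$ to a closed subsurface $N \subset X$, thicken $X \setminus Y$ to the closure of its complement, and compute $\chi(X)$ from $\chi(N)$, $\chi(M)$, and $\chi(N \cap M)$. The ``correction term'' $\chi(N \cap M)$ will vanish because the intersection has the homotopy type of $\partial N$, a compact $1$-manifold that is a disjoint union of circles.

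Concretely, I would let $N$ be the closed regular neighborhood of $Y$ given by the union of closed simplices of the second barycentric subdivision of $X$ that meet $Y$. Standard regular-neighborhood theory yields three properties: $N$ is a compact subsurface of $X$; its frontier $\partial N$ in $X$ is a disjoint union of circles; and $N$ deformation retracts onto $Y$, so $\chi(N) = \chi(Y)$. Setting $M := X \setminus \operatorname{Int}(N)$, a symmetric argument --- pushing every point of $\operatorname{Int}(N) \setminus Y$ outward through the collar of $\partial N$ that lies in $N$ --- shows that $X \setminus Y$ deformation retracts onto $M$, and hence $\chi(X \setminus Y) = \chi(M)$.

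To satisfy the open-cover hypothesis of Proposition~\ref{prop:inclusion-exclusion}, I would take a slightly larger regular neighborhood $N'$ of $Y$ with $N \subset \operatorname{Int}(N')$ and apply the proposition to $A = N'$ and $B = M$. Their interiors already cover $X$ (since $N \subset \operatorname{Int}(A)$ and $X \setminus N \subset \operatorname{Int}(B)$), and $A \cap B = N' \setminus \operatorname{Int}(N)$ is a collar of $\partial N$, which deformation retracts onto $\partial N$, so $\chi(A \cap B) = \chi(\partial N) = 0$. Putting everything together:
\[
\chi(X) = \chi(N') + \chi(M) - \chi(A \cap B) = \chi(Y) + \chi(X \setminus Y).
\]

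The main technical obstacle is producing $N$ and its thickening $N'$ with the properties I use --- a collar structure on $\partial N$ inside $N$, a deformation retract of $N$ onto $Y$, and a bicollar of $\partial N$ inside $X$ --- in a piecewise-linear way adapted to the given triangulation, together with the deformation retract of $X \setminus Y$ onto $M$. These are classical facts in PL topology, but they do require some care near $\partial X$ when $X$ has nonempty boundary, where one must arrange that $\partial N$ remains a disjoint union of circles (for instance by adjoining a collar of $\partial X$ to $N$ beforehand).
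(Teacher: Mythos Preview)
Your proposal is correct and follows essentially the same strategy as the paper: thicken $Y$ to a regular neighborhood, apply the inclusion--exclusion formula, and observe that the overlap is a collar (a union of annuli or M\"obius bands) with vanishing Euler characteristic. The paper's version is slightly more economical in that it applies Proposition~\ref{prop:inclusion-exclusion} directly to an \emph{open} regular neighborhood $Z\supset Y$ and to $X\setminus Y$ itself---both already open---so there is no need for your intermediate set $M$ or the separate argument that $X\setminus Y$ deformation retracts onto it.
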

\begin{proof}
  Note that $Y$ is closed in $X$ and is a deformation retract of some open subsurface $Z\subset X$. Morever, $Z\cap (X\setminus Y)$ is a union of disjoint annuli or Möbius bands whose Euler characteristic is null. By proposition~\ref{prop:inclusion-exclusion} and the homotopy invariance of $\chi$ we conclude $\chi(X)=\chi(Z)+\chi(X\setminus Y) = \chi(Y)+\chi(X\setminus Y)$.
\end{proof}

\section{Overview of the proof of Theorem~\ref{th:mu-bound}}\label{sec:overview}
Let $G$ be a graph embedded on a surface $S$, which we can assume closed without loss of generality. We may also assume that $S$ is not homeomorphic to a sphere as otherwise Theorem~\ref{th:mu-bound} follows directly from Theorem~\ref{th:mu-planar}. In a first step, we remove an open disk $D\subset S\setminus G$ whose boundary avoids $G$ and build a graph $H$ embedded in $S\setminus D$ such that
\begin{description}
\item[C1.\label{eq:triangulates}] $H$ triangulates $S\setminus D$ and subdivides $\partial D$ into a cycle of $\mu(G)-1$ edges,
\item[C2.\label{eq:minor}] $G$ is a minor of $H$, and
\item[C3.\label{eq:edgewidth}] the length of the shortest closed walk in $H$ that is non-contractible in $S\setminus D$, i.e. the \define{edgewidth} of $H$ in $S\setminus D$, is $\mu(G)-1$. 
\end{description}
We denote by $W$ the set of vertices of $H$. 
We next choose a Schrödinger operator $L$ for $H$ whose corank achieves $\mu(H)$. Condition C2 and the monotonicity of $\mu$ imply $\mu(H)\geq \mu(G)$ so that $\ker L$ has dimension at least $\mu(G)$. It thus contains a nonzero vector $f$ that cancels on the $\mu(G)-1$ vertices of $\partial D$ by C1. We pick such an $f$ with minimal support so that by Theorem~\ref{th:nodal} the subsets of vertices $W_f^+$ and $W_f^-$ induce connected subgraphs of $H$. We connect the vertices of $\partial D$ by inserting $\mu(G)-4$ edges in $D$ to obtain a graph $H'$ with the same vertices as $H$ and that triangulates $S$. We can now extend $f$ linearly on each face of $H'$ to get a piecewise linear map $\bar{f}: S\to \R$. Let $S_f^+, S_f^0, S_f^-$ denote the subspaces of $S$ where 
$\bar{f}$ is respectively positive, null, and negative. By Theorem~\ref{th:nodal}, $S_f^+$ and $S_f^-$ are connected open subsurfaces of $S$, while $S_f^0$ is a closed subcomplex of some subdivision of the triangulation induced by $H'$. We can thus apply Corollary~\ref{cor:chi-sum} to write
\[\chi(S) = \chi(S_f^0) + \chi(S_f^+\cup S_f^-) = \chi(S_f^0) + \chi(S_f^+)+\chi(S_f^-).
\]
By the classification of surfaces, we have $\chi(S_f^+)\leq 1$ and $\chi(S_f^-)\leq 1$. It ensues that $\chi(S_f^0) \geq \chi(S) - 2$. The goal is now to upper bound $\chi(S_f^0)$ in terms of $\mu(G)$ in order to obtain an upper bound for $\mu(G)$. To this end, we build a graph $\Gamma$ with larger Euler characteristic than $S_f^0$ by contracting its two dimensional parts. We then argue that the two dimensional part $K$ containing $D$ has a non-contractible boundary in $S\setminus D$. It follows that this boundary  has length at least the edgewidth of $H$, hence at least $\mu(G)-1$ by condition C3. Thanks to Lemma~\ref{lem:plus-minus} we may infer that $K$ contracts to a vertex of degree at least $\mu(G)-1$ in $\Gamma$. We also argue thanks to Lemma~\ref{lem:plus-minus} that $\Gamma$ has no vertex of degree one. We easily deduce that $\chi(\Gamma) \leq (3-\mu(G))/2$. We finally conclude that
\[(3-\mu(G))/2 \geq \chi(S_f^0) \geq \chi(S) - 2,
\]
hence $\mu(G)\leq 7 -2\chi(S)$. In the remainder of the paper, with provide the details of this sketch of proof.

\section{Triangulation with prescribed edgewidth}
An \define{isometric filling} of the cycle graph of length $n$ is a subdivision $\mathcal F$ of a disk so that $\mathcal F$ has $n$ vertices on its boundary and so that every path in the 1-skeleton of $\mathcal F$ connecting two vertices of $\partial {\mathcal F}$ contains at least as many edges as the shortest path in $\partial {\mathcal F}$ between these two vertices. The isometric filling is triangulated when every face of $\mathcal F$ is a triangle.
\begin{lemma}\label{lem:filling}
  For every $n\geq 3$, the cycle graph of length $n$ has a triangulated isometric filling.
\end{lemma}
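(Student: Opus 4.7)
I would proceed by strong induction on $n \ge 3$, with the base case $n = 3$ given by a single triangle. For $n \ge 4$, I invoke the induction hypothesis to obtain a triangulated isometric filling $\mathcal F_{n-1}$ of a cycle with boundary vertices $w_0, \ldots, w_{n-2}$, and then construct $\mathcal F_n$ by attaching an annulus with outer boundary $v_0, \ldots, v_{n-1}$ to the inner boundary of $\mathcal F_{n-1}$. The annulus triangulation is chosen so that each outer vertex is adjacent to at most two consecutive inner vertices: $v_0$ is joined only to $w_0$; for $1 \le i \le n-2$, $v_i$ is joined to $w_{i-1}$ and $w_i$; and $v_{n-1}$ is joined to both $w_{n-2}$ and $w_0$. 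Concretely, the annulus triangles are the ``roofs'' $v_i v_{i+1} w_i$ for $i = 0, \ldots, n-2$, an extra roof $v_{n-1} v_0 w_0$, the ``valleys'' $v_{i+1} w_i w_{i+1}$ for $i = 0, \ldots, n-3$, and an extra valley $v_{n-1} w_{n-2} w_0$.

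To show $\mathcal F_n$ is isometric, I would decompose any path $P$ from $v_c$ to $v_d$ into maximal outer sub-paths (on the outer cycle) and maximal interior sub-paths (whose intermediate vertices lie off the outer cycle). I would first observe that distances between inner cycle vertices in $\mathcal F_n$ still equal their $C_{n-1}$ cycle distances: paths through $\mathcal F_{n-1}$ realize this distance by induction, and any length-$2$ detour $w \to v_i \to w'$ through the annulus moves at most one step along the inner cycle, since the set $\psi(v_i)$ of inner cycle neighbours of each $v_i$ consists of at most two consecutive vertices. It follows that any interior sub-path of $P$ from $v_a$ to $v_b$ has length at least $2 + d_{C_{n-1}}(w, w')$ for some $w \in \psi(v_a)$ and $w' \in \psi(v_b)$.

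The argument then reduces to verifying the inequality
\[
d_{C_{n-1}}(w, w') + 2 \;\ge\; d_{C_n}(v_a, v_b)
\]
for every $v_a, v_b$ and every $w \in \psi(v_a), w' \in \psi(v_b)$. I would check this by a short case analysis on the positions of $v_a, v_b$; in every case the slack of $2$ absorbs both the unit-length difference between $C_n$ and $C_{n-1}$ and any off-by-one arising from the choice of $w, w'$. Summing the resulting bounds over the decomposition of $P$ and applying the triangle inequality in $C_n$ would then give $|P| \ge d_{C_n}(v_c, v_d)$, which completes the induction. I expect the main obstacle to be this final case analysis: it is technically routine but made slightly awkward by the asymmetric extra roof and valley triangles near $v_{n-1}$ that are needed to absorb the length defect between the outer cycle $C_n$ and the inner cycle $C_{n-1}$.
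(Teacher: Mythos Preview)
Your inductive construction is correct, but it is a genuinely different route from the paper's proof. The paper gives a direct construction due to Cossarini: for even $n=2m$, take a simple arrangement of $m$ lines in the plane, form its dual quadrangulation of a disk with $2m$ boundary vertices, and star each quadrangle to triangulate; the isometric property follows in one stroke from the Jordan curve theorem, since any path between two boundary vertices must cross every line that separates them. For odd $n$ one subdivides a single boundary edge and stars the resulting pentagon. There is no induction and no case analysis.

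Your approach trades this global geometric argument for an elementary layer-by-layer induction. The price is the final verification of $d_{C_{n-1}}(w,w') + 2 \ge d_{C_n}(v_a,v_b)$, which is indeed routine but genuinely fiddly near the ``seam'' at $v_{n-1},v_0$. (I checked the extremal antipodal cases and the inequality is tight but never violated, so the case analysis goes through.) One small simplification: your preliminary observation that inner-cycle distances are preserved in $\mathcal F_n$ is not actually needed. By your own definition of an interior sub-path, all intermediate vertices already lie in $\mathcal F_{n-1}$, so the middle segment is a path in $\mathcal F_{n-1}$ between two of its boundary vertices, and the induction hypothesis applies directly to give length $\ge d_{C_{n-1}}(w,w')$. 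You can drop the detour discussion entirely.

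In short: both arguments work; the paper's is shorter and more conceptual, yours is more hands-on and self-contained without invoking line arrangements.
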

\begin{proof}
  First suppose that $n=2m$ is even. We use the following construction of Cossarini~\cite{c-dslam-20}. Consider a simple arrangement ${\mathcal A}$ of $m$ lines in the plane, where any two lines intersect and no three lines have a common intersection\footnote{In fact, any pseudo-line arrangement will do.}. The dual subdivision ${\mathcal A}^*$ has one vertex per face of the arrangement and one edge for each pair of adjacent faces. The union of the bounded faces of ${\mathcal A}^*$ defines a quadrangulation $\mathcal{Q}$ of a disk with $2m$ vertices on its boundary. See Figure~\ref{fig:isofilling}.
  \begin{figure}[h]
    \centering
    \includesvg[.7\linewidth]{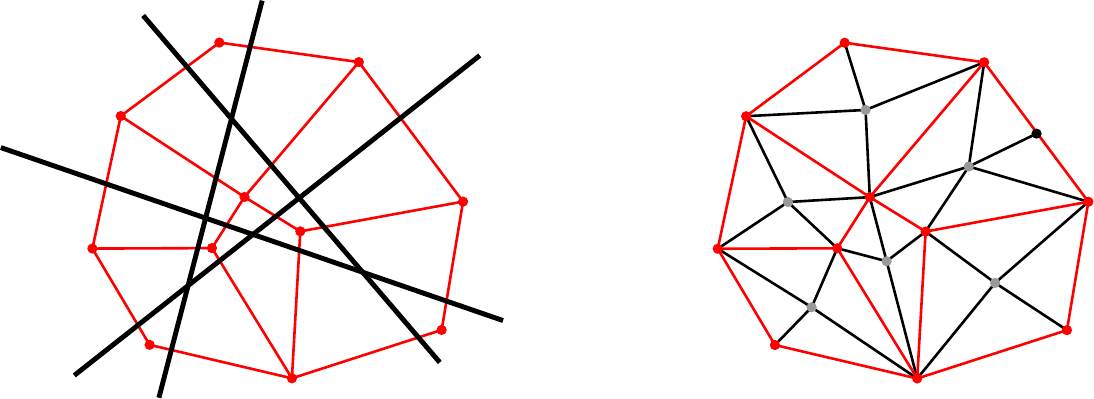}
    \caption{Left, an arrangement of 4 lines and its dual subdivision. Right, an isometric filling of the cycle graph of length 9.}
    \label{fig:isofilling}
  \end{figure}
  This quadrangulation is an isometric filling of the cycle graph of length $n$. Indeed, let $p:u\leadsto v$ be a simple path of length at most $m$ on the boundary of $\mathcal{Q}$. Each edge of $p$ intersects a unique line of ${\mathcal A}$ that separates $u$ from $v$ in the plane. It follows from the Jordan curve theorem that any path $q$ from $u$ to $v$ in $\mathcal{Q}$ also crosses this line. Hence, $q$ contains an edge dual to some edge of ${\mathcal A}$ included in this line. As this is true for every edge of $p$ it ensues  that $p$ is no longer than $q$. In order to get a triangulation, we simply star every quadrangle of $\mathcal{Q}$ from an interior point.

  When $n=2m+1$ is odd, we start with the same quadrangulation $\mathcal{Q}$ as above and insert one vertex on one of its boundary edges. As a result all the faces are quadrangles except for one pentagon. We can nonetheless apply the same triangulation procedure as above by starring each face from an interior point. As this starring defines triangulated isometric filling for a quadrilateral as well as for a pentagon, we obtain this way a triangulated isometric filling of the cycle graph of length $2m+1$.
\end{proof}
Remark that an isometric filling $\mathcal F$ remains so even after  identifying some vertices and edges on the boundary of $\mathcal F$.
Formally, if $\pi: {\mathcal F}\to {\mathcal F}/ \sim$ is the corresponding quotient map, a shortest path in $\pi(\partial {\mathcal F})$ is no longer than any path between the same endpoints in ${\mathcal F}/ \sim$.

\begin{proposition}\label{prop:edgewidth}
  Let $G$ be a graph embedded on a closed surface $S$ that is not a sphere, and let  $D\subset S\setminus G$ be an open disk whose boundary avoids $G$. For every integer $k\geq 3$, there exists a graph $H$, of which $G$ is a minor,  that triangulates $S\setminus D$, subdividing $\partial D$ into a cycle of $k$ edges, and with edgewidth $k$ in $S\setminus D$.
\end{proposition}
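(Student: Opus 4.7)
The plan is to build $H$ by starting from a simple triangulation of $S\setminus D$ that already contains $G$ and has $\partial D$ as a triangle, then refining it by a uniform subdivision of its edges combined with the triangulated isometric fillings supplied by Lemma~\ref{lem:filling} for its faces. The subdivision factor acts as a stretching of the combinatorial metric, and the isometric fillings transfer this stretching from the $1$-skeleton to the full surface, so that the edgewidth of the refined triangulation ends up equal to the length of $\partial D$.

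I would first extend the embedding of $G$ into a triangulation $\hat T_0$ of $S$ with $G$ as subgraph and $D$ as one triangular face, which is achieved by shrinking $D$ inside a single face of $G$'s embedding and then triangulating the remaining faces. Removing the interior of $D$ yields a triangulation $T_0$ of $S\setminus D$ whose boundary $\partial D$ is a triangle $e_1 e_2 e_3$. Next, pick positive integers $k_1+k_2+k_3=k$ and subdivide each $e_i$ into $k_i$ edges, while subdividing every other edge of $T_0$ into exactly $k$ edges; then $\partial D$ is a cycle of length exactly $k$. Finally, for each face $F$ of $T_0$ (now a polygon whose subdivided boundary has length at least $3$ and at most $3k$), insert a triangulated isometric filling from Lemma~\ref{lem:filling}. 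The resulting graph $H$ triangulates $S\setminus D$, giving condition C1; and since every edge of $G\subseteq T_0$ has been replaced by an internal path of $H$ that can be contracted, $G$ is a minor of $H$, giving condition C2.

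To establish C3, take an arbitrary non-contractible cycle $\gamma$ of $H$ in $S\setminus D$ and decompose it into maximal arcs lying inside a single face-filling, interleaved with maximal subpaths on the subdivided $1$-skeleton of $T_0$. Each internal arc has its endpoints on the boundary of its face, so by the isometric filling property it can be replaced by a shortest $\partial F$-path of no greater length; since the replacement takes place inside the disk $F$, it does not change the free homotopy class of $\gamma$ in $S\setminus D$. This yields a non-contractible closed walk $\gamma'$ on the subdivided $1$-skeleton with $|\gamma'|\le|\gamma|$. Interpreting $\gamma'$ as a closed walk on $T_0$ that uses each $e_i$ with multiplicity $a_i$ and the non-boundary edges a total of $b$ times, one obtains $|\gamma'|=a_1k_1+a_2k_2+a_3k_3+bk$. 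If $b\ge 1$ then $|\gamma'|\ge k$; otherwise $\gamma'$ is confined to the triangle $\partial D$, and since $\partial D$ represents an element of infinite order in $\pi_1(S\setminus D)$ when $S$ is not a sphere, its winding number $w$ around $\partial D$ is nonzero, forcing $a_i\ge|w|\ge 1$ and $|\gamma'|\ge k_1+k_2+k_3=k$. Hence the edgewidth of $H$ in $S\setminus D$ is at least $k$, and equality holds because $\partial D$ itself is a non-contractible cycle of length $k$.

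The main obstacle is the push-to-boundary argument: one has to verify that the boundary replacements really produce a closed walk in the same free homotopy class as $\gamma$ (which follows from each face being a topological disk) and that the exceptional $b=0$ case is genuinely covered by a winding-number argument using that $\partial D$ is non-torsion in $\pi_1(S\setminus D)$ for every non-spherical closed $S$. The remark following Lemma~\ref{lem:filling} about isometric fillings being preserved under boundary identifications is what keeps the argument honest when two faces of $T_0$ share subdivided edges. Everything else reduces to careful bookkeeping of how edges and faces are refined.
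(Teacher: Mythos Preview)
Your proof is correct and follows essentially the same strategy as the paper's: extend $G$ to a cellular graph on $S\setminus D$ that contains $\partial D$, subdivide all edges, fill each resulting face with a triangulated isometric filling from Lemma~\ref{lem:filling}, and establish the edgewidth by pushing any non-contractible closed walk to the subdivided $1$-skeleton via the isometric-filling property. The only difference is that the paper models $\partial D$ as a single loop edge subdivided uniformly into $k$ subedges (rather than a triangle with parts $k_1+k_2+k_3=k$), which makes the edgewidth of the subdivided skeleton immediate and spares your $b=0$ winding-number case.
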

\begin{proof}
  We may construct $H$ as follows. We first add a loop edge along the unique boundary component $\partial D$ of $S\setminus D$ and insert other edges to form a cellularly embedded graph $G'$ including $G$ and the loop edge as subgraphs. We next insert $k-1$ vertices in each edge of $G'$ to subdivide it into $k$ subedges. Trivially, the resulting graph $G''$ has edgewidth at least $k$. In fact, it has edgewidth exactly $k$ as $\partial D$ is a non-contractible cycle of length $k$ in $S\setminus D$. It remains to triangulate every face of $G''$ with an isometric filling as in Lemma~\ref{lem:filling}. We can take for $H$ the 1-skeleton of the resulting triangulation of $S\setminus D$. Indeed, by substituting paths interior to the faces of $G''$ by no longer paths on their boundaries we see that every non-contractible closed walk in $H$ is homotopic to a no longer closed walk in $G''$, hence has length at least $k$.
\end{proof}
It is easily seen that the triangulation in the proposition is simplicial.

\section{Proof of Theorem~\ref{th:mu-bound}}
As explained in the proof overview, we may assume that $G$ is embedded on a closed surface $S$ that is not a sphere. We may also assume that $\mu(G)\geq 7$ for otherwise Theorem~\ref{th:mu-bound} is trivially true. We consider an open disk $D$ in $S\setminus G$. By Proposition~\ref{prop:edgewidth}, there is a graph $H$ of edgewidth $\mu(G)-1$, having $G$ as a minor, that triangulates $S\setminus D$ with $\mu(G)-1$ vertices on $\partial D$. Let $L$ be a Schrödinger operator for $H$ with corank $\mu(H)$ and let $f\in \ker L$ of minimal support that cancels on the vertices of $\partial D$. We triangulate $D$ by inserting $\mu(G)-4$ edges with endpoints on $\partial D$. The union of this triangulation with the triangulation of $S\setminus D$ by $H$ defines a triangulation of $S$ whose graph is denoted by $H'$. Note that $H$ and $H'$ have the same set of vertices that we denote $W$. Let $\bar{f}: S\to \R$ be the piecewise linear extension of $f$ and let $S_f^0=\bar{f}^{-1}(0)$. As argued in Section~\ref{sec:overview}, we have
\begin{align}
  \chi(S_f^0) &\geq \chi(S) - 2 \label{eq:chi-S0}
\end{align}
The intersection of $S_f^0$ with a closed triangle of $H'$ may be either the whole triangle, a segment connecting two points on the triangle boundary (either vertices or edge interior points), or one vertex of the triangle. See Figure~\ref{fig:zeroset}.
\begin{figure}[h]
  \centering
  \includegraphics[width=\linewidth]{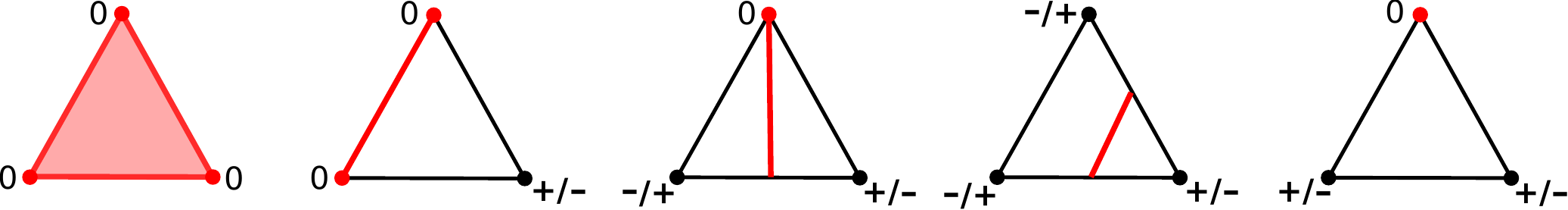}
  \caption{Intersection of the set with a triangle.}
  \label{fig:zeroset}
\end{figure}
$S_f^0$ can thus be decomposed into a two dimensional part made of triangles of $H'$ and a one dimensional part. The two dimensional part is a subsurface of $S$ with singular vertices where several triangles meet at a vertex but do not form a contiguous sequence in the star of the vertex. See Figure~\ref{fig:blowup}, left. We blow up every singular vertex by locally separating the contiguous sequences of triangles and connecting them with a small star graph as on Figure~\ref{fig:blowup}, right.
\begin{figure}[h]
  \centering
  \includegraphics[width=.7\linewidth]{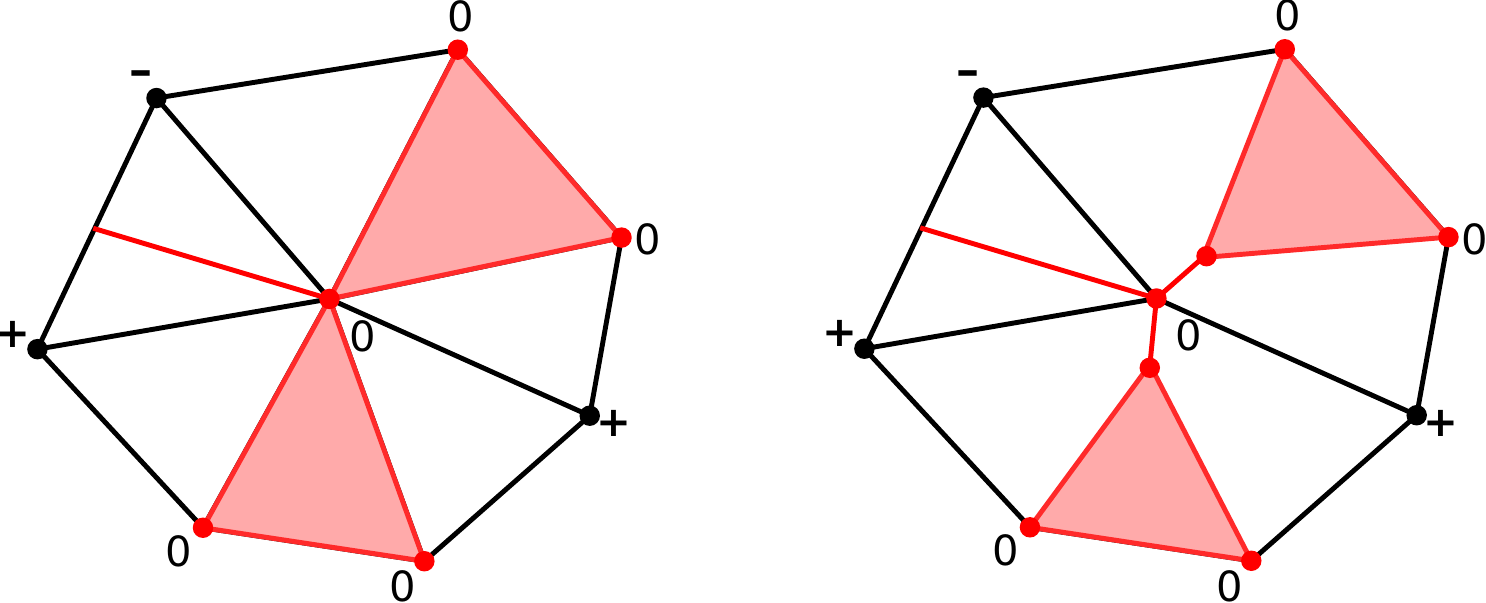}
  \caption{Left, the star of a singular vertex. Right, blowing up the vertex.}
  \label{fig:blowup}
\end{figure}
This does not change the homotopy type of the zero set $S_f^0$ and remove all the singularities of the two dimensional part. Note that this blowup operation neither changes the homotopy (in fact homeomorphism) type of $S_f^+$ and $S_f^-$. We denote by $P, Z, N$ the respective modified sets  $S_f^+, S_f^0, S_f^-$ after blowing up all the singular vertices. Note that $Z$ is a triangulated complex. Now, every two dimensional component of $Z$ is a compact subsurface with nonempty boundary, hence has Euler characteristic at most one.
Moreover, every vertex $v$ on the boundary of such a component is incident to the one dimensional part of $Z$. Indeed, if $v$ results from a blowup operation, then this is true by construction. Otherwise, $v$ is a vertex of $H'$ and Lemma~\ref{lem:plus-minus} implies that $v$ is adjacent to both $W_f^+$ and $W_f^-$ (recall that these are the subsets of vertices of $H'$ where $f$ is respectively positive and negative). As a consequence the link of $v$ intersects $Z$ in at least two components. Since $v$ is not singular at most one of those components is not reduced to a vertex. In particular there must be a vertex component corresponding to a segment of $Z$ incident to $v$.

We next form a graph $\Gamma$ from $Z$ by contracting each two dimensional component $C$ of $Z$ to a vertex $v_C$. By the previous discussion, replacing $C$ by $v_C$, may only increase the Euler characteristic, so that $\chi(\Gamma)\geq \chi(Z)$. Moreover,  the degree of $v_C$ in $\Gamma$ is at least the number of vertices on the boundary of $C$. In particular, this degree is at least three since $H'$ has no loop nor multiple edge (cf. the comment after the proof of Proposition~\ref{prop:edgewidth}). Also, it follows from Lemma~\ref{lem:plus-minus} that $\Gamma$ has no isolated vertices or vertices of degree one. Hence every vertex of $\Gamma$ has degree at least two.

We now consider the two dimensional component $K$ of $Z$ that contains $D$. Let $c$ be a boundary component of $K$. We claim that
$c$ is non-contractible in $S\setminus D$. By way of contradiction, suppose that $c$ is  contractible, hence bounds an open disk $B$ in $S\setminus D$. This disk cannot contain $\partial D$ since it is non-contractible in $S\setminus D$. It follows that $\partial D$, hence $K$, is contained in the complement of $B$ in $S$. On the other hand, by Lemma~\ref{lem:plus-minus}  every vertex of $c$  is in the closure of both $P$ and $N$. Since $P, N$ are both connected by Theorem~\ref{th:nodal}, we deduce that $P$ and $N$ are fully contained in $B$. We are thus in the situation of two disjoint open sets, $P$ and $N$, contained in a disk $B$ and whose closures have at least three vertices $x,y,z$ (from $c$) in common. One can then extract a tripod $K_{1,3}$ in $P$ with leaves $x,y,z$, and similarly for $N$. We connect these two tripods in $B$ by a path $p$ intersecting the tripods only at its extremities. We have thus obtained an embedding in the planar region $B\cup c$ of the union of the two tripods with $p$ and with $c$. This is however impossible as this union graph contains $K_5$ as a minor as illustrated on Figure~\ref{fig:K5}.
\begin{figure}[h]
  \centering
  \includesvg[.6\linewidth]{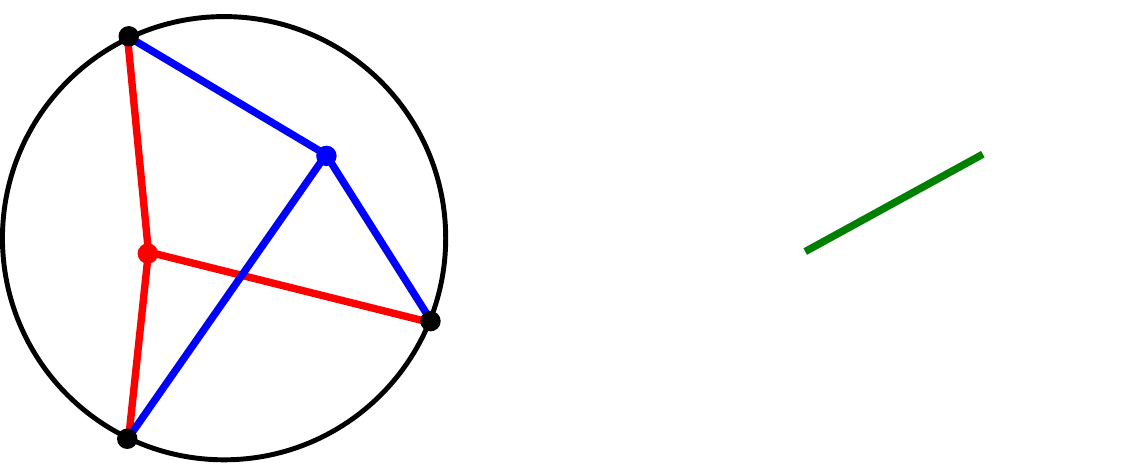}
  \caption{Left, two tripods in a disk and a connecting path. Right, a $K_5$ minor obtained after contracting the edges $su$ and $tv$.}
  \label{fig:K5}
\end{figure}
This ends the proof of the claim. The boundary cycle $c$ of $K$ thus corresponds to a non-contractible closed walk $c'$ in $H$ with the same number of edges as $c$. It ensues that $c$ has length at least $\mu(G)-1$, the edgewidth of $H$. In turn, this implies that $v_K$ has degree at least $\mu(G)-1$ in $\Gamma$. By the handshaking lemma, we have
\[ 2|E(\Gamma)| \geq \mu(G)-1 + 2(|V(\Gamma)|-1).
\]
It follows that 
\[\chi(\Gamma) = |V(\Gamma)| - |E(\Gamma)| \leq |V(\Gamma)| - (|V(\Gamma)| + \frac{\mu(G)-3}{2}).
\]
We infer
\[\chi(S_f^0) = \chi(Z) \leq \chi(\Gamma) \leq \frac{3-\mu(G)}{2}.
\]
In conjunction with the above inequality \eqref{eq:chi-S0}, we conclude $\chi(S)-2 \leq (3-\mu(G))/2$, or equivalently $\mu(G)\leq 7 -2\chi(S)$ as desired.

\bibliographystyle{alpha} 
\bibliography{mu} 

\begin{thebibliography}{VdHLS99}

\bibitem[Bes80]{b-mpvps-80}
G{\'e}rard Besson.
\newblock Sur la multiplicit{\'e} de la premi{\`e}re valeur propre des surfaces
  riemanniennes.
\newblock {\em Annales de l'institut Fourier}, 30(1):109--128, 1980.

\bibitem[CdV87]{c-clpfs-87}
Yves Colin~de Verdi{\`e}re.
\newblock Construction de laplaciens dont une partie finie du spectre est
  donn{\'e}e.
\newblock {\em Annales scientifiques de l'{\'e}cole normale sup{\'e}rieure},
  20(4):599--615, 1987.

\bibitem[CdV88]{c-hta-88}
Yves Colin~de Verdi{\`e}re.
\newblock Sur une hypoth{\`e}se de transversalit{\'e} d'{A}rnol'd.
\newblock {\em Comment. Math. Helv.}, 63(2):184--193, 1988.

\bibitem[CdV90]{c-nigcp-90}
Yves Colin~de Verdi{\`e}re.
\newblock Sur un nouvel invariant des graphes et un crit{\`e}re de
  planarit{\'e}.
\newblock {\em Journal of Combinatorial Theory, Series B}, 50(1):11--21, 1990.

\bibitem[CdV94]{c-tccc-94}
Yves Colin~de Verdi{\`e}re.
\newblock Th{\'e}or{\`e}mes de {C}ourant et de {C}heng combinatoires.
\newblock {\em S{\'e}minaire de th{\'e}orie spectrale et g{\'e}om{\'e}trie},
  13:9--13, 1994.

\bibitem[CdV98]{c-sg-98}
Yves Colin~de Verdi{\`e}re.
\newblock {\em Spectres de graphes}.
\newblock Number~4 in Cours {S}p\'eciali\'es. Soci\'et\'e {M}ath\'ematique de
  {F}rance, 1998.

\bibitem[Cos18]{c-dslam-20}
Marcos Cossarini.
\newblock {\em Discrete surfaces with length and area and minimal fillings of
  the circle}.
\newblock PhD thesis, Instituto de Matem\'atica Pura e Aplicada (IMPA), 2018.

\bibitem[Nad88]{n-melo-88}
Nikolai~Semenovich Nadirashvili.
\newblock Multiple eigenvalues of the {L}aplace operator.
\newblock {\em Mathematics of the USSR-Sbornik}, 61(1):225, 1988.

\bibitem[Pen98]{p-sggc-98}
Rudolf~Anton Pendavingh.
\newblock {\em Spectral and Geometrical Graph Characterizations}.
\newblock PhD thesis, Amsterdam University, 1998.

\bibitem[S{\'e}v02]{s-mssec-02}
Bruno S{\'e}vennec.
\newblock Multiplicity of the second {S}chr{\"o}dinger eigenvalue on closed
  surfaces.
\newblock {\em Mathematische Annalen}, 324(1):195--211, 2002.

\bibitem[Spa89]{s-at-89}
Edwin~H. Spanier.
\newblock {\em Algebraic topology}.
\newblock Springer Verlag, corrected reprint of the 1966 original edition,
  1989.

\bibitem[VdH95]{h-sppcc-95}
Hein Van~der Holst.
\newblock A short proof of the planarity characterization of {C}olin de
  {V}erdi{\`e}re.
\newblock {\em Journal of Combinatorial Theory, Series B}, 65(2):269--272,
  1995.

\bibitem[VdHLS99]{hls-cdvgp-99}
Hein Van~der Holst, L{\'a}szl{\'o} Lov{\'a}sz, and Alexander Schrijver.
\newblock The {C}olin de {V}erdi{\`e}re graph parameter.
\newblock {\em Graph Theory and Computational Biology (Balatonlelle, 1996)},
  pages 29--85, 1999.

\end{thebibliography}
\end{document}